\newtheorem{theorem}{Theorem}[section]
\newtheorem{corollary}[theorem]{Corollary}
\theoremstyle{definition}
\newtheorem{definition}[theorem]{Definition}
\newtheorem{example}[theorem]{Example}
\newtheorem{construction}[theorem]{Construction}
\theoremstyle{remark}
\newtheorem{remark}[theorem]{Remark}
\newtheorem*{notation}{Notation}
\newcommand{\rank}{\operatorname{rank}}
\newcommand{\lcm}{\operatorname{lcm}}
\newcommand{\degree}{\operatorname{degree}}
\newcommand{\bbN}{\mathbbm{N}}
\newcommand{\bbk}{\mathbbm{k}}
\newcommand{\fraka}{{\mathfrak{a}}}
\newcommand{\frakm}{{\mathfrak{m}}}
\newcommand{\bfF}{{\mathbf{F}}}
\newcommand{\bfG}{{\mathbf{G}}}
\newcommand{\bfT}{{\mathbf{T}}}
\newcommand{\bfb}{{\mathbf{b}}}
\newcommand{\bfu}{{\mathbf{u}}}
\newcommand{\bfei}{{\mathbf{e}_i}}
\newcommand{\bfej}{{\mathbf{e}_j}}
\newcommand{\bfet}{{\mathbf{e}_t}}
\newcommand{\bfzero}{{\mathbf{0}}}
\newcommand{\barS}{{\overline{S}}}
\newcommand{\barSt}{{\overline{S \cup t}}}
\newcommand{\tildesigma}{{\widetilde{\sigma}}}
\newcommand{\eps}{{\varepsilon}}
\newcommand{\eS}{{\varepsilon_S}}
\newcommand{\eSt}{{\varepsilon_{S \cup t}}}
\newcommand{\eStq}{{\varepsilon_{S \cup t \cup q}}}
\newcommand{\Sq}{{S \cup q}}
\newcommand{\St}{{S \cup t}}
\newcommand{\Stq}{{S \cup t \cup q}}
\newcommand{\fij}{{f_{i,j}}}
\newcommand{\fit}{{f_{i,t}}}
\newcommand{\fiq}{{f_{i,q}}}
\newcommand{\fjt}{{f_{j,t}}}
\newcommand{\fjq}{{f_{j,q}}}
\newcommand{\ptS}{{p_{t,S}}}
\newcommand{\ptSq}{{p_{t,S \cup q}}}
\newcommand{\pqS}{{p_{q,S}}}
\newcommand{\pqSt}{{p_{q,S \cup t}}}
\newcommand{\oz}{{10}}
\newcommand{\zo}{{01}}
\newcommand{\barray}[2]{{\left[ \begin{array}{#1} #2 \end{array} \right]}}
\newcommand{\ideal}[1]{{\langle #1 \rangle}}
\tikzset{->-/.style={decoration={
  markings,
  mark=at position #1 with {\arrow{>}}},postaction={decorate}}}
\title{A Taylor Resolution Over Complete Intersections}
\author{Aleksandra Sobieska}
\date{\today}
\begin{document}

\tikzset{->-/.style={decoration={
  markings,
  mark=at position #1 with {\arrow{latex}}},postaction={decorate}}}

\maketitle

\begin{abstract}

The Taylor resolution is a fundamental object in the study of free resolutions over the polynomial ring, due to its explicit formula, cellular/combinatorial structure, and applicability to any and all monomial ideals. This paper generalizes the Taylor resolution to complete intersection rings via the Eisenbud--Shamash construction.

\end{abstract}

\section{Introduction}

The Taylor resolution \cite{tIdealsGenByMonomsIARSeq} universally resolves any monomial ideal over the polynomial ring; it generalizes the Koszul complex, has straightforward cellular support, and is eminently computable. Indeed, it is often among the first constructions one learns to build a free resolution. Its combinatorial and explicit nature have 
influenced many advances in commutative algebra, some of which can be found in  \cite{hhTightClosureInvariantThATBrianconSkodaTheorem, lANewExplFinFrResOIdGenByMonoms, ekMinResOSomeMonId, eCommAlgWAView, bsCellResOMonMod, 
bwDiscreteMorseThForCellRes, msCombCommAlg, pGradedSyzygies} and the references therein. 

Resolutions over singular rings are typically infinite and more complicated. Results for special types of rings -- complete intersections \cite{gAChangeORingThWAppToPoincareSAIntMult, gOTDeviatnsOALocalRing, eHomAlgOnACompleteInt, aModOFinVirtualProjDim, agpCompleteIntersectionDimension, abHomAlgModARegSeqWSpecialAttToCodimTwo, epMinFResOCompleteInt}, Golod rings \cite{gHomOSomeLocalRings, hrwCompLinIdealsAGolodRings, gpwRationalityFGenericToricR, bjOTGolodPOStanleyReisnerR}, Koszul algebras \cite{fKoszulAlgs, aeRegularityOModulesOAKoszulAlg, rOTCharOKoszulAlgFourCounterex} -- and/or special types of modules -- the ground field \cite{tHomONoethRingsALocalRings, pKoszulRes, mHomology, fDetOACOPoincareSeries, aACounterexampleTAConjOSerre, rsAToricRingWIrrationalPoincareBettiSeries} -- have been developed, but still much remains to be understood. 
Some surveys on infinite free resolutions can be found in \cite{aInfFRes, mpInfGrFRes}. 

In this article, we generalize the Taylor resolution to complete intersection rings. 
The fundamental idea behind this construction is to produce the homotopies from the Eisenbud--Shamash construction entirely in terms of combinatorial data (essentially ratios of lcms).
In providing an explicit formula for the system of homotopies, we show that no higher homotopies are necessary to write the complete resolution. 
Because the entries are easily expressed as ratios of $\lcm$s, this method generates a body of examples of infinite free resolutions, and provides combinatorial recipes for matrix factorizations. 
Finally, it is worth noting that, in addition to generalizing the Taylor resolution of monomial ideals to complete intersections, this construction recovers the well-known Tate resolution \cite{tHomONoethRingsALocalRings} as well as the resolution over monomial complete intersections in \cite{iFreeResAChangeORings}.

The main result can be found in Definition~\ref{def:taylorhomotopies} and Theorem~\ref{thm:systemofhomotopiesforT}. We provide an example of how this construction provides a direct way to resolve a monomial ideal over a complete intersection. 

\begin{example}
Let $Q = \bbk[x,y,z]$ and $I = \ideal{x^2, y^2, z^2}$, and $\fraka = \ideal{x^2z + xy^2}$ and $R=Q/\fraka$. The Taylor resolution of $I$ over $R$ given by Theorem~\ref{thm:systemofhomotopiesforT} is 
\[
\bfF: \cdots 
\xrightarrow{\varphi_5} 
\begin{array}{c} R(-6) \\ \oplus \\ R(-7)^3 \end{array}
\xrightarrow{\varphi_4} 
\begin{array}{c} R(-5)^3 \\ \oplus \\ R(-6) \end{array} 
\xrightarrow{\varphi_3} 
\begin{array}{c} R(-3) \\ \oplus \\ R(-4)^3 \end{array} 
\xrightarrow{\varphi_2} 
R(-2)^3 
\xrightarrow{\varphi_1} 
R 
\rightarrow 0
\]
where 
$F_{2i-1} = R(-3i+3) \oplus R(-3i+2)^3$ for $i \geq 2$ 
and 
$F_{2i} = R(-3i+1)^3 \oplus R(-3i)$, 
the initial maps are 
\[
\varphi_1 = 
    \begin{blockarray}{*{4}{>{\scriptstyle}c}<{}}
    1 & 2 & 3 \\ 
    \begin{block}{[*{3}r]>{\scriptstyle}c<{}}
    x^2 & y^2 & z^2 & \emptyset \\
    \end{block}
    \end{blockarray}
\text{ and }
\varphi_2 = 
    \begin{blockarray}{*{5}{>{\scriptstyle}c}<{}}
    \emptyset & 12 & 13 & 23 \\ 
    \begin{block}{[r|*{3}r]>{\scriptstyle}c<{}}
    z & y^2 & z^2 & 0 & 1 \\ 
    x & -x^2 & 0 & z^2 & 2 \\ 
    0 & 0 & -x^2 & -y^2 & 3 \\
    \end{block}
    \end{blockarray},
\] 
and the tail of the resolution is periodic with maps 
\[ 
\varphi_{2i-1} = 
    \begin{blockarray}{*{5}{>{\scriptstyle}c}<{}}
    1 & 2 & 3 & 123 \\ 
    \begin{block}{[*{3}r|r]>{\scriptstyle}c<{}}
    \color{blue}{x^2} & \color{blue}{y^2} & \color{blue}{z^2} & 0 & \emptyset \\
    \BAhhline{----}
    \color{red}{x} & \color{red}{-z} & 0 & \color{blue}{z^2} & 12 \\
    0 & 0 & \color{red}{-z} & \color{blue}{-y^2} & 13 \\ 
    0 & 0 & \color{red}{-x} & \color{blue}{x^2} & 23 \\
    \end{block}
    \end{blockarray}
\text{ and }
\varphi_{2i} = 
    \begin{blockarray}{*{5}{>{\scriptstyle}c}<{}}
    \emptyset & 12 & 13 & 23 \\ 
    \begin{block}{[r|*{3}r]>{\scriptstyle}c<{}}
    \color{red}{z} & \color{blue}{y^2} & \color{blue}{z^2} & 0 & 1 \\ 
    \color{red}{x} & \color{blue}{-x^2} & 0 & \color{blue}{z^2} & 2 \\ 
    0 & 0 & \color{blue}{-x^2} & \color{blue}{-y^2} & 3 \\
    \BAhhline{----}
    0 & 0 & \color{red}{-x} & \color{red}{z} & 123 \\
    \end{block}
    \end{blockarray}
\]
for $i \geq 2$.

We can see all the maps from the classic Taylor resolution inside the differentials of $\bfF$ (written in blue). As with the Taylor resolution, rows and columns indexed with subsets of $\{1,2,3\}$ corresponding to subsets of the generators of $I$, and there is a non-zero entry in the matrix factorization when the row label is a subset of the column label with just one element missing. 

The entries accounting for the singularity of $R$ are placed in prescribed spots (written in red) via the expression $x^2z + y^2x = z \cdot x^2 + x \cdot y^2$, which gives the generator of $\fraka$ as a combination of the first and second generators of $I$. Therefore, there is a non-zero entry in the matrix factorization when the row label is a union of the column label and either $\{1\}$ or $\{2\}$. The formula for the entry is a ratio of lcm's of subsets of generators of $I$, similar to the Taylor resolution. 

\begin{figure}
\centering 
  \begin{subfigure}[b]{0.47\linewidth}
  \centering
\begin{tikzpicture}
[scale=.65, vertices/.style={draw, fill=black, circle, inner sep=2pt}]

    \node (123) at (0,0) [vertices, label=left:$123$] {};
    \node (12) at (3,4) [vertices, label=above:$12$] {};
    \node (13) at (3,2) [vertices, label=below:$13$] {};
    \node (23) at (3,0) [vertices, label=below:$23$] {};
    \node (1) at (6,6) [vertices, label=above:$1$] {};
    \node (2) at (6,4) [vertices, label=above:$2$] {};
    \node (3) at (6,2) [vertices, label=below:$3$] {};
    \node (e) at (9,6) [vertices,label=$\emptyset$] {};
    \foreach \from/\to in {123/12,123/13,123/23, 1/e,2/e,3/e} 
        \draw[blue, ultra thick, ->-=.6] (\from) -- (\to);
    
    \foreach \from/\to in {12/1,12/2, 13/1, 13/3, 23/2,23/3}
        \draw[blue, ultra thick, ->-=.75] (\from) -- (\to);
\end{tikzpicture}
\caption{The Taylor resolution as a directed graph}\label{fig:taylorres}
\end{subfigure}
\begin{subfigure}[b]{0.47\linewidth}
\centering 
\begin{tikzpicture}
[scale=.75, vertices/.style={draw, fill=black, circle, inner sep=2pt}]

    \node (123) at (6,0) [vertices, label=below:$123$] {};
    \node (12) at (2,4) [vertices, label=above:$12$] {};
    \node (13) at (2,2) [vertices, label=below:$13$] {};
    \node (23) at (2,0) [vertices, label=below:$23$] {};
    \node (1) at (6,6) [vertices, label=above:$1$] {};
    \node (2) at (6,4) [vertices, label=above:$2$] {};
    \node (3) at (6,2) [vertices, label=below:$3$] {};
    \node (e) at (2,6) [vertices,label=$\emptyset$] {};
    \foreach \from/\to in {123/12,123/13,123/23, 1/e,2/e,3/e} 
        \draw[blue, ultra thick, ->-=.85] (\from) -- (\to);
    
    \foreach \from/\to in {12/1,12/2, 13/1, 13/3, 23/2,23/3}
        \draw[blue, ultra thick, ->-=.85] (\from) -- (\to);
    
    \draw[red, ultra thick, ->-=.6] (e) to [bend left=20] (1);
    \draw[red, ultra thick, ->-=.6] (2) to [bend right=20] (12);
    \draw[red, ultra thick, ->-=.6] (3) to [bend left=20] (13);
    \draw[red, ultra thick, ->-=.6] (23) to [bend right=20] (123);
    
    \draw[red, ultra thick, ->-=.6] (e) to [bend left=20] (2);
    \draw[red, ultra thick, ->-=.6] (1) to [bend right=20] (12);
    \draw[red, ultra thick, ->-=.7] (3) to [bend left=20] (23); 
    \draw[red, ultra thick, ->-=.7] (13) to [bend right=20] (123);
    
\end{tikzpicture}
\caption{The matrix factorization as a directed graph}\label{fig:matfactorizn}
\end{subfigure}
\caption{A graphical description of the matrix factorization}
\end{figure}
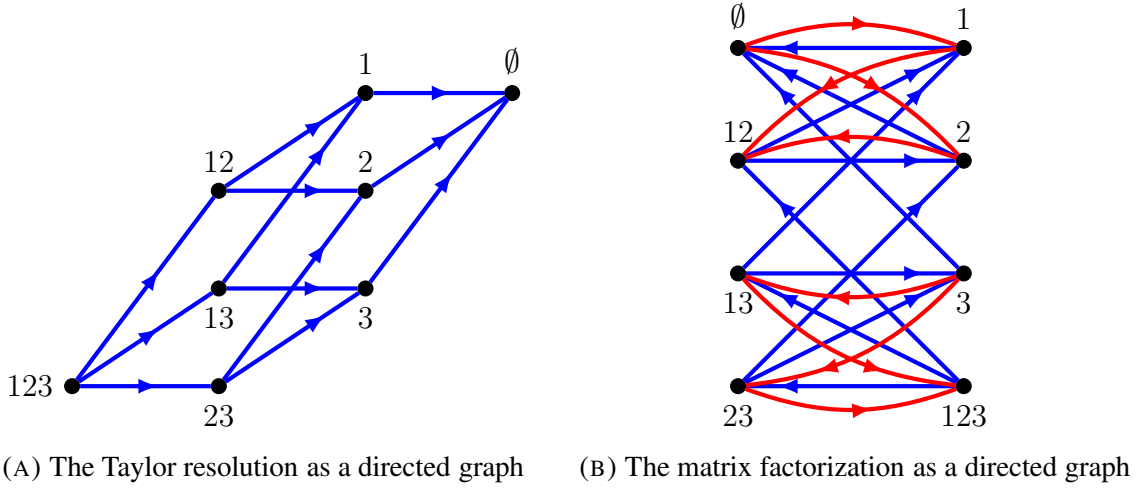

Figure~\ref{fig:taylorres} shows the summands (represented by vertices) and non-zero differential arrows (represented by blue arrows) from the Taylor resolution. In Figure~\ref{fig:matfactorizn}, the original directed graph is rearranged into a bipartite graph with even-cardinality subsets on one side and odd-cardinality subsets on the other. With the addition of the eight red arrows either from a vertex $S$ to $\{1\} \cup S$ or from $S$ to $\{2\} \cup S$, we can encapsulate the induced matrix factorization with a edge-colored, directed bipartite graph, where each edge represents a nonzero entry -- the blue edges give a ``Taylor'' entry and the red edges give a ``lift'' entry. 

\end{example}

\textit{Acknowledgements:} The author would like to thank Daniel Erman for many illuminating conversations during all stages of this article's development. Most, if not all, computations were aided by \texttt{Macaulay2} \cite{gsM2}.

\section{Background}

\begin{notation}
For a positive integer $r$, use $[r]$ to denote the set $\{1,2, \ldots, r\}$. We will occasionally identify the singleton set $\{s\}$ and $s$ when $s \in [r]$, especially when appearing in long and teeny subscripts.
\end{notation}

\subsection{The Taylor Resolution}

Let $Q = \bbk[x_1, \ldots, x_n]$ be a polynomial ring over a field and $I \subseteq Q$ be a monomial ideal $I = \langle m_1, \ldots, m_r \rangle$. For a set $S = \{j_1, \ldots, j_k\} \subseteq [r]$ with $j_1 < \ldots < j_k$, let $m_S = \lcm(m_{j_1}, \ldots, m_{j_k})$, and use $v_S$ to denote $\degree m_S$. 

\begin{definition}[\cite{tIdealsGenByMonomsIARSeq}]\label{def:taylorres} 
The Taylor resolution $Q/I$ for a monomial ideal $I = \
\langle m_1, \ldots, m_r \rangle$ is given by $(T_k, \tau_k)$, where 
\[T_k = \bigoplus\limits_{S \subseteq [r], \ |S| = k} Q(-v_S)\]
and 
\[\tau_k(\varepsilon_S) = \sum\limits_{s_i \in S} (-1)^{k-i} \frac{m_S}{m_{S-s_i}} \varepsilon_{S - s_i} \text{ for } S = \{s_1, \ldots, s_k\}.\]
\end{definition}

The resolution $\bfT$ is highly non-minimal, but is always a resolution for any monomial ideal $I$.

\example \label{ex:taylor} Let $Q = \bbk[x,y,z]$ and $I = \langle xy, xz, yz \rangle$. The Taylor resolution of $Q/I$ over $Q$ is given by 
\[
0 \rightarrow Q(-3) \xrightarrow{\tau_3} Q(-3)^3 \xrightarrow{\tau_2} Q(-2)^3 \xrightarrow{\tau_1} Q \rightarrow 0
\]
where we label and order the basis elements as follows: 
\begin{enumerate}
    \item basis for $T_0 = Q$: $\{ \eps_\emptyset \}$ 
    \item basis for $T_1 = Q(-2)^3$: $\{ \eps_1, \eps_2, \eps_3 \}$
    \item basis for $T_2 = Q(-3)^3$: $\{ \eps_{12}, \eps_{13}, \eps_{23} \}$
    \item basis for $T_3 = Q(-3)$: $\{ \eps_{123} \}$.
\end{enumerate}
 
The differentials for $\bfT$ are 
\[ 
\tau_1 =
\begin{blockarray}{*{4}{>{\scriptstyle}c}<{}}
1 & 2 & 3 \\ 
\begin{block}{[*{3}r]>{\scriptstyle}l<{}}
xy & xz & yz & \emptyset \\
\end{block}
\end{blockarray},
\: 
\tau_2 = 
\begin{blockarray}{*{4}{>{\scriptstyle}c}<{}}
12 & 13 & 23 \\ 
\begin{block}{[*{3}r]>{\scriptstyle}l<{}}
z & z & 0 & 1 \\ 
-y & 0 & y & 2 \\ 
0 & -x & -x & 3 \\
\end{block}
\end{blockarray},
\text{ and } 
\tau_3 = 
\begin{blockarray}{*{2}{>{\scriptstyle}c}<{}}
123 \\ 
\begin{block}{[r]>{\scriptstyle}l<{}}
1 & 12 \\
-1 & 13\\
1 & 23 \\
\end{block}
\end{blockarray}.
\] 

\subsection{A System of Homotopies}

Let $\fraka = \ideal{a_1, \ldots, a_c}$ be generated by a regular sequence $a_1, \ldots, a_c$ in $Q$ and $R = Q/\fraka$. The following definitions and constructions are due to Shamash \cite{sTPoinSerOALocalRing} in the $c=1$ case, and to Eisenbud \cite{eHomAlgOnACompleteInt, eEnrichedFreeResAChangeORings} when $c \geq 2$. We give a summary of the necessary background with some finer details omitted to suit our particular setting; a full treatment can be found in \cite[Chapter~3]{aInfFRes} \cite[Section~4.1]{epMinFResOCompleteInt}. 

\begin{definition}\label{def:sysofhomot}
Let $\bfG$ be a complex of free $Q$-modules. Let $\bfu = (u_1, \ldots, u_c)$, where each $u_i \geq 0$ is an integer; use $\bfet$ to denote the $t$th standard basis vector of length $c$ and $\bfzero$ to denote the zero vector. Set $|\bfu| = \sum_i u_i$. A \textit{system of higher homotopies $\sigma$} for $a_1, \ldots, a_c$ on $\bfG$ is a collection of maps
\[
\sigma_\bfu : \bfG \rightarrow \bfG[-2|\bfu|+1]
\]
on the modules in $\bfG$ where the three following conditions are satisfied: 
\begin{enumerate}[label=(\roman*)]
    \item \label{item:homot1} $\sigma_\bfzero$ is the differential on $\bfG$, 
    \item \label{item:homot2} for each $j \in [c]$, the map $\sigma_\bfzero \sigma_\bfej + \sigma_\bfej \sigma_\bfzero$ is multiplication by $a_j$ on $\bfG$, and 
    \item \label{item:homot3} if $|\bfu| \geq 2$, then 
    \[\sum\limits_{\bfb + \bfb'=\bfu} \sigma_\bfb \sigma_{\bfb'} = 0.\]
\end{enumerate}
\end{definition}

\begin{construction}
Consider the divided power algebra $Q\{y_1, \ldots, y_c\} \cong \oplus Qy_1^{(i_1)}y_2^{(i_2)}\cdots y_c^{(i_c)}$ on variables $y_1, \ldots, y_c$ of degree $2$, and the polynomial ring $Q[t_1, \ldots, t_c]$, where we treat the monomials in $Q\{y_1, \ldots, y_c\}$ and in $Q[t_1, \ldots, t_c]$ as dual to each other. In this way, we can prescribe an action of $Q[t_1, \ldots, t_c]$ on $Q\{y_1, \ldots, y_c\}$ by 
\[t_j y_j^{(i)} = y_j^{(i-1)}.\]
For a system of homotopies $\sigma$ on $\bfG$, the graded free $R$-module 
\[\Phi(\bfG) \coloneqq Q\{y_1, \ldots, y_c\} \otimes \bfG \otimes R \]
with differential 
\[ \varphi \coloneqq \sum t^\bfu \otimes \sigma_\bfu \otimes R\]
is called the \textit{Shamash construction} or \textit{Eisenbud--Shamash construction}.
\end{construction}

\begin{theorem} \cite{sTPoinSerOALocalRing, eHomAlgOnACompleteInt} If $\bfG$ is a $Q$-free resolution of a finitely generated module $N$, then $\Phi(\bfG)$ is an $R$-free resolution of $N$. 
\end{theorem}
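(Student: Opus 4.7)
The plan is to verify two things about $\Phi(\bfG)$: that it is actually a complex, i.e., $\varphi^2 = 0$, and that its homology is concentrated in degree zero with $H_0 \cong N$.

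For the complex condition, I would expand $\varphi^2$ applied to a typical element $y^{(\bfu)} \otimes g \otimes 1$, then regroup terms by the total divided-power index $\bfw$ stripped off $y^{(\bfu)}$ by the composed $t$-actions. This gives
\[
\varphi^2\bigl(y^{(\bfu)} \otimes g \otimes 1\bigr) \;=\; \sum_{\bfw \leq \bfu} y^{(\bfu - \bfw)} \otimes \Bigl(\sum_{\bfv + \bfv' = \bfw} \sigma_{\bfv'} \sigma_\bfv\Bigr)(g) \otimes 1.
\]
The three cases $\bfw = \bfzero$, $\bfw = \bfej$, and $|\bfw| \geq 2$ are handled respectively by conditions \ref{item:homot1}, \ref{item:homot2}, and \ref{item:homot3} of Definition~\ref{def:sysofhomot}; in the middle case the contribution is $a_j \otimes g \otimes 1$, which vanishes because $a_j \in \fraka$ becomes zero in $R$.

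For the homology calculation, first observe that $H_0(\Phi(\bfG)) = H_0(\bfG) \otimes_Q R = N \otimes_Q R$; since $N$ is naturally an $R$-module we have $\fraka N = 0$ and this tensor product is just $N$. The remaining task is showing $H_i(\Phi(\bfG)) = 0$ for $i > 0$. I would introduce the filtration $F_p \Phi(\bfG) = \bigoplus_{|\bfu| \leq p} Qy^{(\bfu)} \otimes \bfG \otimes R$, which is a filtration by subcomplexes precisely because every $\sigma_\bfv$ with $\bfv \neq \bfzero$ strictly decreases the divided-power index. The associated graded complex has differential $\sigma_\bfzero \otimes R$, so the $E^1$ page of the resulting spectral sequence consists of free copies of $\Tor^Q_*(N, R)$ indexed by divided-power monomials in $y_1, \ldots, y_c$; the $d^1$ differential is then induced by the homotopies $\sigma_{\bfej}$.

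The main obstacle is showing that this spectral sequence collapses to $N$ in homological degree zero. This is where the regular-sequence hypothesis on $a_1, \ldots, a_c$ is essential: it ensures that the Koszul complex on the $a_j$'s is a $Q$-free resolution of $R$, and the duality between the polynomial algebra $Q[t_1, \ldots, t_c]$ acting on $Q\{y_1, \ldots, y_c\}$ packages the $\sigma_\bfej$-differentials into a Koszul-style contracting homotopy that annihilates all $\Tor^Q_i(N,R)$ for $i > 0$ once we pass to $R$. Executing this carefully, and checking that the contributions of the higher homotopies $\sigma_\bfu$ with $|\bfu| \geq 2$ do not interfere, is the crux of the Shamash/Eisenbud argument; a complete treatment is given in \cite[Chapter~3]{aInfFRes} and \cite[Section~4.1]{epMinFResOCompleteInt}.
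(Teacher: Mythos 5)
The paper does not actually prove this statement: it is quoted from Shamash and Eisenbud as background, with all details deferred to \cite[Chapter~3]{aInfFRes} and \cite[Section~4.1]{epMinFResOCompleteInt}. So there is no in-paper argument to compare yours against; the relevant benchmark is the cited sources, whose strategy you reproduce in outline. The first half of your proposal is complete and correct: grouping $\varphi^2$ by the total exponent $\bfw$ stripped from $y^{(\bfu)}$ matches the three conditions of Definition~\ref{def:sysofhomot} exactly, and the $\bfw=\bfej$ contribution dies because $a_j=0$ in $R$. (Implicit throughout, and worth stating, is that $\fraka N=0$; this is needed both for the homotopies $\sigma_\bfej$ to exist and for the conclusion to typecheck, and you do invoke it when computing $H_0$.)

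The acyclicity in positive degrees, however, is set up but not proved, and that is where the entire content of the theorem lives. Your filtration by $|\bfu|\leq p$ is the right one, and the identification of $E^1$ with copies of $\Tor^Q_*(N,R)$ indexed by divided-power monomials is correct, but the decisive claim --- that the $d^1$ differential induced by the $\sigma_\bfej$ kills everything outside total degree zero --- is asserted and then handed back to the references. The mechanism is more concrete than your ``Koszul-style contracting homotopy'' phrasing suggests: because $a_1,\dots,a_c$ is a regular sequence and $\fraka N=0$, one has $\Tor^Q_i(N,R)\cong N\otimes_\bbk\Lambda^i(\bbk^c)$ with all Koszul differentials vanishing, and the step that must be checked is that $\sigma_\bfej$ induces on $\Tor$ the contraction against the $j$th exterior generator. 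Granting that, $E^1$ becomes $N$ tensored with the complex $Q\{y_1,\dots,y_c\}\otimes\Lambda^*(\bbk^c)$, which is exact except in degree zero, and the bounded filtration gives collapse at $E^2$. As written, your proposal establishes that $\Phi(\bfG)$ is a complex with $H_0\cong N$ and leaves the vanishing of higher homology as a citation --- which is exactly what the paper itself does, but it is a genuine gap if the goal was a self-contained proof.
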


More information on divided power algebras can be found in \cite[Appendix 2]{eCommAlgWAView}. For the reader who is satisfied to restrict to the characteristic zero case, the divided power algebra and the symmetric algebra behave roughly equivalently. 

\begin{example}
Let $Q = \bbk[x_1,x_2]$ and $R = Q/\ideal{x_1^5}$. We will use the Shamash construction to write a (minimal) resolution of $R/\ideal{x_1^2,x_2^2}$. The Taylor resolution 
\[\bfT: 0 \rightarrow Q(-4) \xrightarrow{\tau_2} Q(-2)^2 \xrightarrow{\tau_1} Q \rightarrow 0\]
resolves $Q/\ideal{x_1^2,x_2^2}$ minimally. The maps
\[\sigma_{1,0}: Q \xrightarrow{\barray{c}{x_1^3 \\ 0}} Q(-2)^2
\quad 
\text{ and }
\quad 
\sigma_{1,1}: Q(-2)^2 \xrightarrow{\barray{cc}{0 & -x_1^3}} Q(-4)\] 
form a system of homotopies on $\bfG$ for $x^5$, though from here on out, we will drop the first index from the subscript to avoid notational clutter. Identifying $Q(-d) \otimes R$ with $R(-d)$, we construct the Shamash resolution of $Q/\ideal{x_1^2,x_2^2}$ as an $R$-module: 
\[
\hspace{-.75in}
\cdots \rightarrow 
    \begin{array}{c}
        Qy^{(2)} \otimes R  \\
        \bigoplus \\
        Qy^{(1)} \otimes R(-4) 
    \end{array}
\xrightarrow{\barray{c|c}{\sigma_0 & \tau_2}}
Qy^{(1)} \otimes R(-2)^2
\xrightarrow{\barray{c}{\tau_1 \\ \hline \sigma_1}}
    \begin{array}{c}
        Qy^{(1)} \otimes R \\ 
        \bigoplus \\ 
        Q \otimes R(-4)
    \end{array}
\xrightarrow{\barray{c|c}{\sigma_0 & \tau_2}}
Q \otimes R(-2)^2
\xrightarrow{\barray{c}{\tau_1}}
Q \otimes R
\rightarrow 
0
\]
Note that $Qy^{(d)} \otimes R(-k)$ can be identified with $R(-5d-k)$, so we could in fact rewrite the constituent modules in the resolution above as $R(-5i+1) \oplus R(-5i)$ in homological degrees $2i$ and $R(-5i+3)^2$ in homological degrees $2i-1$, when $i>0$.
\end{example}

More generally, if $\deg(a_j) = d_j$, then we can identify $Qy_1^{(p_1)}y_2^{(p_2)}\cdots y_c^{(p_c)} \otimes Q(-k) \otimes R$ with $R(-k-\sum\limits_{i=1}^{c} p_id_i)$ in the Shamash construction to obtain the correct twist. 

\section{Taylor Resolutions Over Complete Intersections}

Let $I = \ideal{m_1, \ldots, m_r}$ be a monomial ideal in $Q$ and $\fraka = \ideal{a_1, \ldots, a_c}$ be generated by a regular sequence in $Q$ with $\fraka \subseteq I$. This means that each $a_i = f_{i,1}m_1 + \cdots + f_{i,r}m_r$ for some $f_1, \ldots, f_r \in Q$, though such an expression is not inherently unique without first fixing a monomial ordering on $Q$. Once these $\fij$ are fixed, the following construction will produce a system $\sigma$ of higher homotopies for $a_1, \ldots, a_c$ on $\bfT$, where $\bfT$ is the Taylor resolution of $Q/I$. 

\begin{definition}\label{def:taylorhomotopies}
Let $\bfu = (u_1, \ldots, u_c)$, where each $u_i \geq 0$ is an integer; use $\bfei$ to denote the $i$th standard vector of length $c$ and $\bfzero$ to denote the zero vector. Set $|\bfu| = \sum_i u_i$. Define a system of maps 
\[\sigma_u:\bfT \rightarrow \bfT[-2|\bfu|+1]\] 
in the following way: 
\begin{itemize}
    \item for $S = \{s_1, \ldots, s_k \} \subseteq [r]$,
    \[ \sigma_\bfzero(\eS) = \sum\limits_{s_i \in S} (-1)^{k-i} \frac{m_S}{m_{S-s_i}} \varepsilon_{S - s_i},\]
    that is, $\sigma_\bfzero = \tau$, 
    \item for $S = \{s_1, \ldots, s_k \} \subseteq [r]$ and $\barS = [r]-S$, \[\sigma_\bfei(\eS) = \sum\limits_{t \in \barS} (-1)^{k-p_{t,S}-1} \frac{f_{i,t} m_t m_S}{m_\St} \eSt \]
    where $p_{t,S}$ is the position of $t$ in $\St$, that is, $\St = \{s_1, \ldots, s_{p_{t,S}-1}, t, s_{p_{t,S}}, \ldots, s_k\}$, 
    \item and 
    \[ \sigma_{\mathbf{u}} = 0\] 
    for all other $\bfu \in \bbN^c$, i.e. if $|\bfu| \geq 2$. 
\end{itemize}
\end{definition}

\begin{theorem}\label{thm:systemofhomotopiesforT}
The maps $\sigma$ given in Definition~\ref{def:taylorhomotopies} form a system of higher homotopies for $a_1, \ldots, a_c$ on the Taylor complex $\bfT$. Therefore, when the Shamash construction is applied to the system of higher homotopies for $a_1, \ldots, a_c$ on $\bfT$, these maps furnish a resolution $\Phi$ of $R/I$. 
\end{theorem}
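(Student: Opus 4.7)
The plan is to verify the three axioms of Definition~\ref{def:sysofhomot} directly for the maps $\sigma$ specified in Definition~\ref{def:taylorhomotopies}; the resolution claim follows at once from the Shamash--Eisenbud theorem quoted earlier. Axiom~\ref{item:homot1} is the first bullet of Definition~\ref{def:taylorhomotopies}. For Axiom~\ref{item:homot3}, the case $|\bfu| \geq 3$ is vacuous: every term $\sigma_{\bfb}\sigma_{\bfb'}$ in the defining sum has either $|\bfb| \geq 2$ or $|\bfb'| \geq 2$, so at least one factor is zero by the third bullet of Definition~\ref{def:taylorhomotopies}. The substantive content is therefore Axiom~\ref{item:homot2} together with the identity $\sigma_{\bfei}\sigma_{\bfej} + \sigma_{\bfej}\sigma_{\bfei} = 0$ for all $i, j \in [c]$.

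To handle Axiom~\ref{item:homot2}, I will apply $\tau\sigma_{\bfej} + \sigma_{\bfej}\tau$ to a generic basis element $\eS$ and sort the output by target. Two kinds of targets appear: $\eS$ itself (``diagonal'' terms, produced when $\tau$ removes exactly the element $t$ that $\sigma_{\bfej}$ just inserted, or vice versa) and $\varepsilon_{(\St) - s_i}$ for $t \in \barS$, $s_i \in S$ (``off-diagonal''). On the diagonal, the lcm ratios telescope to $m_t$ or $m_{s_i}$, the signs collapse to $+1$, and summing across both compositions produces $\bigl(\sum_{t=1}^{r} \fjt m_t\bigr)\eS$, which is $a_j\eS$ by the fixed presentation $a_j = f_{j,1} m_1 + \cdots + f_{j,r} m_r$. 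Off the diagonal, the lcm factors match between the two compositions, so cancellation reduces to the parity identity that $p_{t,S} + p_{s_i, \St}$ and $i + p_{t, S - s_i}$ differ by exactly $1$. This is verified by a short case split on whether $t > s_i$ or $t < s_i$, using the fact that inserting or deleting an element that precedes a given position shifts that position by one.

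For the remaining case of Axiom~\ref{item:homot3}, I will expand $\sigma_{\bfej}\sigma_{\bfei}(\eS)$ as a sum over ordered pairs $(t, q)$ of distinct elements of $\barS$; the coefficient of $\varepsilon_{\Stq}$ has the shape $(\pm 1) \cdot \fit f_{j,q} \cdot (\text{ratio of lcms})$, with a parallel expression for $\sigma_{\bfei}\sigma_{\bfej}$ obtained by swapping the roles of $i$ and $j$ in the $f$-product. Grouping the combined sum by the unordered pair $\{a, b\}$ with $a < b$, I observe that the lcm ratio depends only on $\{a, b\}$, that the combined $f$-coefficient $\fit f_{j,q} + f_{j,t}\fiq$ is invariant under $(t,q) \leftrightarrow (q,t)$, and that the two ordered contributions $(a,b)$ and $(b,a)$ carry opposite signs because $a < b$ forces $p_{b, S \cup a} = p_{b, S} + 1$ while $p_{a, S \cup b} = p_{a, S}$. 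Hence the pair cancels term by term; the identical argument, with $i = j$ and commutativity of $Q$ used to identify $\fjt f_{j,q}$ with $f_{j,q}\fjt$, yields $\sigma_{\bfej}^2 = 0$ on the nose.

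The main obstacle throughout is the sign bookkeeping for the position subscripts $p_{\cdot, \cdot}$ as elements are added to and removed from ordered subsets, condensed into the two short parity identities above. Once these are in hand, the rest is routine manipulation of lcm ratios together with the single invocation of $a_j = \sum_{t} \fjt m_t$. With $\sigma$ confirmed as a system of higher homotopies on the $Q$-free Taylor resolution $\bfT$ of $Q/I$, the Shamash--Eisenbud construction then produces the promised $R$-free resolution $\Phi$ of $R/I$.
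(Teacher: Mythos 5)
Your proposal is correct and follows essentially the same route as the paper: a direct verification of the three axioms, with the diagonal/off-diagonal split and telescoping lcm ratios for axiom~(ii), and the expansion over ordered pairs with the position-shift parity identity for axiom~(iii). The only (harmless) organizational difference is that you treat $\bfu = 2\bfei$ as the $i=j$ instance of the general anticommutator computation, whereas the paper writes out the two cases separately.
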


The existence of a system of higher homotopies is guaranteed by \cite{sTPoinSerOALocalRing, eHomAlgOnACompleteInt} Eisenbud and Shamash, so the main content of the theorem is, first, that no higher homotopies are necessary with the proposed $\sigma_\bfei$, and, second, that the terms in the differentials are a simple combination of the $f_{i,j}$ and, as in the Taylor resolution, a ratio of LCMs of the $m_i$. Therefore, once expressions for the $a_j$ in terms of the $m_i$ are chosen, the resulting resolution is extremely computable and completely prescribed -- no further choices are necessary. In the hypersurface case, this resolution automatically produces a matrix factorization that can be stated purely combinatorially, just in terms of subsets and least common multiples. Some overlap can be found in \cite{iFreeResAChangeORings} and \cite{abHomAlgModARegSeqWSpecialAttToCodimTwo}, but the result here uses different, less technical methods to provide more combinatorially-minded results.

\begin{proof}
The proof method is straightforward: the three criteria from Definition~\ref{def:sysofhomot} must be checked.

\ref{item:homot1} is satisfied by the definition of $\sigma_\bfzero$. 

To check \ref{item:homot3}, first note that $\sigma_\bfb  \sigma_{\bfb'} = 0$ if $\bfb + \bfb' > 2$, since, in this case, at least one of $|\bfb|, |\bfb'| > 2$. Therefore the only time that $\sum\limits_{\bfb+\bfb'=\bfu} \sigma_\bfb \sigma_{\bfb'}$ is not trivially zero is when $|\bfu|=2$. This occurs in two cases: when $\bfu = 2\bfei$ and when $\bfu = \bfei + \bfej$. 

In the case when $\bfu = 2 \bfei$, we have 
\[
\sum\limits_{\bfb + \bfb'= 2\bfei} \sigma_\bfb \sigma_{\bfb'} = \sigma_{2\bfei} \sigma_\bfzero + \sigma_\bfei \sigma_\bfei + \sigma_\bfzero \sigma_{2\bfei} = \sigma_\bfei \sigma_\bfei,
\]
so we must check that $\sigma_\bfei \sigma_\bfei = 0$. Indeed, we see that
\begin{align*}
\sigma_\bfei(\sigma_\bfei(\eS)) &= \sigma_\bfei \left( \sum\limits_{t \in \barS} (-1)^{k-\ptS-1} \frac{\fit m_t m_S}{m_\St} \eSt \right) \\ 
&= \sum\limits_{t \in \barS} (-1)^{k-\ptS-1} \frac{\fit m_t m_S}{m_\St} \sigma_\bfei(\eSt) \\ 
&= \sum\limits_{t \in \barS} (-1)^{k-\ptS-1} \frac{\fit m_t m_S}{m_\St} \left( \sum\limits_{q \in \barSt} (-1)^{k+1-p_{q,\St}-1} \frac{\fiq m_\St m_q}{m_\Stq} \eps_\Stq \right) \\ 
&= \sum\limits_{t \in \barS} \sum\limits_{q \in \barSt} (-1)^{-\ptS-p_{q,\St}-1} \frac{\fit \fiq m_t m_q m_S}{m_\Stq} \eps_\Stq \\ 
&= \sum\limits_{\substack{t<q \\ t,q \in \barS}} \left((-1)^{-\ptS - p_{q,\St}-1} + (-1)^{-\pqS - p_{t,\Sq}-1}\right)\frac{\fit \fiq m_t m_q m_S}{m_\Stq} \eps_\Stq.
\end{align*}

Because $t<q$, we have that $p_{t,\Sq} = \ptS$ but $p_{q,\St} = \pqS + 1$, so we can adjust the final sum to 
\[
\sum\limits_{\substack{t<q \\ t,q \in \barS}} \left((-1)^{-\ptS - (p_{q,S}+1)-1} + (-1)^{-\pqS - p_{t,S}-1}\right)\frac{\fit \fiq m_t m_q m_S}{m_\Stq} \eps_\Stq,
\]
which then reduces to $0$. 

In the case when $\bfu = \bfei + \bfej$, we have 
\[
\sum\limits_{\bfb + \bfb'= \bfei + \bfej} \sigma_\bfb \sigma_{\bfb'} = \sigma_{\bfei + \bfej} \sigma_\bfzero + \sigma_\bfei \sigma_\bfej + \sigma_\bfej \sigma_\bfei + \sigma_\bfzero \sigma_{\bfei + \bfej} = \sigma_\bfei \sigma_\bfej + \sigma_\bfej \sigma_\bfei,
\]
so we must check that $\sigma_\bfei \sigma_\bfej + \sigma_\bfej \sigma_\bfei =0$. First we see that 
\begin{align*}
\sigma_\bfei (\sigma_\bfej (\eS)) &= \sigma_\bfei \left( \sum\limits_{t \in \barS} (-1)^{k-\ptS-1} \frac{\fjt m_t m_S}{m_\St} \eSt \right) \\ 
&= \sum\limits_{t \in \barS} (-1)^{k-\ptS-1} \frac{\fjt m_t m_S}{m_\St} \sigma_\bfei(\eSt) \\ 
&= \sum\limits_{t \in \barS} (-1)^{k-\ptS-1} \frac{\fjt m_t m_S}{m_\St} \left(\sum\limits_{q \in \barSt} (-1)^{k+1-\pqSt -1} \frac{\fiq m_q m_\St}{m_\Stq} \eps_\Stq \right) \\
&= \sum\limits_{t \in \barS} \sum\limits_{q \in \barSt} (-1)^{-\ptS -\pqSt -1} \frac{\fjt \fiq m_t m_q m_S}{m_\Stq} \eStq \\
&= \sum\limits_{\substack{t<q \\ t,q \in \barS}} \left( (-1)^{-\ptS -\pqSt -1}\fjt \fiq + (-1)^{-\pqS -\ptSq-1} \fjq \fit \right) \frac{m_t m_q m_S}{m_\Stq} \eStq \\
&= \sum\limits_{\substack{t<q \\ t,q \in \barS}} \left( (-1)^{-\ptS -(\pqS+1) -1}\fjt \fiq + (-1)^{-\pqS -\ptS-1} \fjq \fit \right) \frac{m_t m_q m_S}{m_\Stq} \eStq \\ 
&= \sum\limits_{\substack{t<q \\ t,q \in \barS}} (-1)^{-\ptS-\pqS}\left( \fjt \fiq - \fjq \fit \right) \frac{m_t m_q m_S}{m_\Stq} \eStq.
\end{align*}

A similar computation shows that 
\begin{align*}
\sigma_\bfej(\sigma_\bfei(\eS)) 
&= \sum\limits_{\substack{t<q \\ t,q \in \barS}}(-1)^{-\ptS-\pqS} \left(\fit \fjq - \fjt \fiq \right) \frac{m_t m_q m_S}{m_\Stq} \eStq. 
\end{align*}
Combining the two computations, we see that $\sigma_\bfei \sigma_\bfej + \sigma_\bfej \sigma_\bfei = 0$. 

It remains to check \ref{item:homot2}.
We compute 
\begin{align*}
\sigma_\bfzero(\sigma_\bfej(\eS)) &= \sigma_\bfzero \left( \sum\limits_{t \in \barS} (-1)^{k-\ptS-1} \frac{\fjt m_t m_S}{m_\St} \eSt \right) \\ 
&= \sum\limits_{t \in \barS} (-1)^{k-\ptS-1}\frac{\fjt m_t m_S}{m_\St} \sigma_\bfzero\left(\eSt\right)\\ 
&= \sum\limits_{t \in \barS} (-1)^{k-\ptS-1} \frac{\fjt m_t m_S}{m_\St} \left( \sum\limits_{\substack{s_i < t \\ s_i \in \St}} (-1)^{k+1-i} \frac{m_\St}{m_{S \cup t - s_i}} \eps_{S \cup t - s_i} \right.\\
& \hspace{1.25in} \left. + (-1)^{k+1-\ptS} \frac{m_\St}{m_S} \eS + \sum\limits_{\substack{s_i > t \\ s_i \in \St}} (-1)^{k+1-(i+1)} \frac{m_\St}{m_{S \cup t - s_i}} \eps_{S \cup t - s_i}  \right) \\
&= \left( \sum\limits_{t \in \barS} \sum\limits_{\substack{s_i < t \\ s_i \in S}} (-1)^{-\ptS-i} \frac{\fjt m_t m_S}{m_{S \cup t - s_i}} \right) + \sum\limits_{t \in \barS} \fjt m_t \eS + \\ 
& \hspace{1.25in} + \left( \sum\limits_{t \in \barS} \sum\limits_{\substack{s_i > t \\ s_i \in S}} (-1)^{-\ptS-i-1} \frac{\fjt m_t m_S}{m_{S \cup t - s_i}} \eps_{S \cup t - s_i} \right) 
\end{align*}

Similarly we can see that
\begin{align*}
\sigma_\bfej(\sigma_\bfzero(\eS))
&= \left( \sum\limits_{s_i \in S} \sum\limits_{\substack{t<s_i \\ t \in \barS}} (-1)^{-i-\ptS} \frac{\fjt m_t m_S}{m_{S-s_i \cup t}} \eps_{S-s_i \cup t} \right) + \sum\limits_{s_i \in S} f_{j,s_i} m_{s_i} \eS \\ 
& \hspace{1.25in} + \left( \sum\limits_{s_i \in S} \sum\limits_{\substack{t > s_i \\ t \in \barS}} (-1)^{-i-\ptS-1} \frac{\fjt m_t m_S}{m_{S-s_i \cup t}} \eps_{S-s_i \cup t} \right).
\end{align*}

Adding the two expressions together, we see that the parenthesized double sums cancel, and we are left with 
\[
\sigma_\bfzero(\sigma_\bfej(\eS)) + \sigma_\bfej(\sigma_\bfzero(\eS)) = \sum\limits_{t \in \barS} \fjt m_t \eS + \sum\limits_{s_i \in S} f_{j,s_i} m_{s_i} \eS = \sum\limits_{i = 1}^r f_{j,i} m_i \eS = a_j \eS.
\]
\end{proof}

A few remarks should be made about the resolution $\bfF$. Due to its origin in the Taylor resolution, $\bfF$ is universal for all monomial ideals $I$ in $Q$ containing $\fraka$. The same uniformity that characterizes the Taylor resolution is also present in $\bfF$ -- these resolutions for two ideals generated by $r$ monomials will have the same ranks in all their free modules. However, this provides a quick bound to the Betti numbers of $I$. This bound could be calculated without explicit knowledge of the homotopies (see \cite[Proposition~3.3.5]{aInfFRes}) but, for the sake of completeness, we include the specific computation here. 

\begin{corollary}\label{cor:bettinumbers}
The total Betti numbers $\beta_i$ of $R/I$ for a monomial ideal $I$ in a complete intersection $R$ satisfy the inequality 
\[
\beta_{2m} \leq \sum\limits_{j=0}^m \binom{r}{2j} \binom{c+m-j-1}{c-1} 
\quad 
\text{ and }
\quad 
\beta_{2m+1} \leq \sum\limits_{j=0}^m \binom{r}{2j+1} \binom{c+m-j-1}{c-1}.
\]
\end{corollary}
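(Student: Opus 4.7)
The plan is to bound $\beta_i$ by the rank of the $i$th free module in the explicit resolution $\Phi(\bfT)$ furnished by Theorem~\ref{thm:systemofhomotopiesforT}, then count that rank combinatorially. Since $\Phi(\bfT)$ resolves $R/I$ (possibly non-minimally), we immediately have $\beta_i \leq \operatorname{rank} \Phi(\bfT)_i$, which is exactly the kind of uniform bound being asserted.

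First, I would unpack the construction: as an $R$-module,
\[
\Phi(\bfT)_n \;=\; \bigoplus_{2|\bfu| + k = n} Q y_1^{(u_1)} \cdots y_c^{(u_c)} \otimes T_k \otimes R,
\]
since divided power monomials $y^{(\bfu)}$ sit in homological degree $2|\bfu|$ and $T_k$ sits in homological degree $k$. Using that $T_k$ has rank $\binom{r}{k}$ by Definition~\ref{def:taylorres}, and that the number of divided power monomials with $|\bfu|=j$ equals the number of weak compositions of $j$ into $c$ parts, namely $\binom{c+j-1}{c-1}$, this yields
\[
\operatorname{rank} \Phi(\bfT)_n \;=\; \sum_{j \geq 0} \binom{c+j-1}{c-1} \binom{r}{n - 2j}.
\]

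Second, I would split into parities. When $n = 2m$, the index $n - 2j$ runs through even values $2(m-j)$ for $j = 0, \ldots, m$; reindexing via $k = m - j$ converts the sum into $\sum_{k=0}^m \binom{r}{2k}\binom{c+m-k-1}{c-1}$, matching the even-degree claim. When $n = 2m+1$, the index $n - 2j$ runs through odd values $2(m-j)+1$ for $j = 0, \ldots, m$, and the same reindexing yields the odd-degree claim. The claim then follows from $\beta_n \leq \operatorname{rank} \Phi(\bfT)_n$.

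There is no real obstacle: the structural input (that $\Phi(\bfT)$ is a resolution with no higher homotopies needed) is already supplied by Theorem~\ref{thm:systemofhomotopiesforT}, and what remains is a bookkeeping exercise using the explicit tensor decomposition of the Shamash construction together with the stars-and-bars count for divided power monomials. The only subtlety worth mentioning is the parity split, which is forced by the fact that $y^{(\bfu)}$ contributes an even homological shift of $2|\bfu|$, so the parity of $n$ is inherited from the Taylor piece $T_k$ and dictates whether $\binom{r}{2k}$ or $\binom{r}{2k+1}$ appears.
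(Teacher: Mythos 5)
Your proposal is correct and follows essentially the same route as the paper: both bound $\beta_n$ by the rank of the $n$th module of the Shamash resolution, decompose that module as $\bigoplus Q\mathbf{y}^{(\bfu)} \otimes T_k \otimes R$, and count using $\rank T_k = \binom{r}{k}$ together with the stars-and-bars count $\binom{c+j-1}{c-1}$ of divided-power monomials of degree $j$. The only cosmetic difference is your choice to index by $|\bfu|$ and reindex afterward, versus the paper's indexing directly by the Taylor degree.
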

\begin{proof}
This follows from a quick rank calculation of the modules in $F$ in the even and odd cases. To wit, 
\[
F_{2m} = \bigoplus\limits_{\substack{j=0 \\ |\bfu| = m-j}}^{m} Q\mathbf{y}^{(\mathbf{u})} \otimes T_{2j} \otimes R,
\]
where we use multinomial notation $\mathbf{y}^\bfu = y_1^{(u_1)} \cdots y_c^{(u_c)}$. Since $\rank T_{2j} = \binom{r}{2j}$ and there are $\binom{c+m-j-1}{c-1}$ monomials of degree $m-j$ in the $y$'s, we can see that 
\[
\rank F_{2m} = \sum\limits_{j=0}^m \binom{r}{2j} \binom{c+m-j-1}{c-1},
\]
which immediately gives the bound for $\beta_{2m}$. The bound for $\beta_{2m+1}$ can be found analogously.
\end{proof}

\begin{remark}\label{rmk:minimality}
Per \cite[Remark~(p.33)]{aInfFRes}, the minimality of $\bfF$ is closely linked to the minimality of $\bfT$, with an extra caveat: $\bfF$ is minimal if and only if $\bfT$ is minimal \textit{and} $\fraka \subseteq \frakm I$. In our particular maps, the necessity that $\fij \in \frakm$ comes from 
\[\sigma_\bfei(\eps_\emptyset) = \sum\limits_{t=1}^r (-1)^{0-1-1} \frac{\fit m_t m_\emptyset}{m_t} \eps_t = \sum\limits_{t=1}^r \fit \eps_t.\] 
Since the $\fit$ appear as entries in the maps of $\bfF$, they must be in $\frakm$ for $\bfF$ to be minimal. On the other hand, the coefficient of $\eSt$ in $\sigma_\bfei(\eS)$ is always a multiple of $\fit$, so $\fit \in \frakm$ is enough to ensure that $\bfF$ is minimal. 
\end{remark}

\begin{example}[Resolution of the Ground Field]
When $I=\ideal{x_1, \ldots, x_n}$, $\bfF$ recovers the Tate complex \cite{tHomONoethRingsALocalRings} resolving $\bbk$ over $R$, since the Taylor resolution coincides with the Koszul resolution of $\bbk$, and the Tate resolution can be obtained by applying the Shamash construction to the Koszul complex. By Remark~\ref{rmk:minimality}, the resolution is minimal, assuming $\deg a_i > 1$ for $a_1, \ldots, a_c$.
\end{example}

\begin{example}[Resolution of $\frakm^b$] 
If $\fraka$ is a regular sequence of degree $d$, then $\bfF$ gives a resolution of $\frakm^b$ for any power $1 \leq b \leq d$. This resolution is nonminimal when $b \geq 2$, since in this case the Taylor resolution does not minimally resolve $\frakm^b$ over $Q$.
\end{example}

The simplest case is when $\fraka$ is generated by a single monomial $a$. Here, at least one of the $m_i$ in $I$ must divide $a$, but it is more interesting to examine a case when more than one $m_i$ divides $a$ to see how the choice of expressing $a$ in terms of the $m_i$ affects the homotopy. 

\begin{example}[Codimension $1$ Monomial $\fraka$ Case] \label{ex:monhypersurface1}  Let $Q = \bbk[x,y,z]$, $a = xyz$, and $I = \langle xy, xz, yz \rangle$. The Taylor resolution of $I$ over $Q$ is given in Example~\ref{ex:taylor}.

Writing $xyz = z \cdot xy + 0 \cdot xz + 0 \cdot yz$, the homotopy $\sigma \coloneqq \sigma_{\mathbf 1}$ has the following behavior: 
\[
\sigma_0 = 
\begin{blockarray}{*{2}{>{\scriptstyle}c}<{}}
\emptyset \\
\begin{block}{[r]>{\scriptstyle}l<{}}
z & 1 \\
0 & 2 \\ 
0 & 3 \\
\end{block}
\end{blockarray}, 
\:
\sigma_1 = 
\begin{blockarray}{*{4}{>{\scriptstyle}c}<{}}
1 & 2 & 3 \\ 
\begin{block}{[*{3}r]>{\scriptstyle}l<{}}
0 & -xz & 0 & 12 \\
0 & 0 & -yz & 13 \\ 
0 & 0 & 0 & 23 \\
\end{block}
\end{blockarray},
\text{ and }
\sigma_2 = 
\begin{blockarray}{*{4}{>{\scriptstyle}c}<{}}
12 & 13 & 23 \\
\begin{block}{[*{3}r]>{\scriptstyle}l<{}}
0 & 0 & xyz & 123 \\
\end{block}
\end{blockarray}
\]

Therefore a resolution of $I$ over $R \coloneqq Q/\langle a \rangle$ is
\[
\bfF: \cdots \xrightarrow{\varphi_5} R(-6)^4 \xrightarrow{\varphi_4} \begin{array}{c}R(-5)^3 \\ \oplus \\ R(-3) \end{array} \xrightarrow{\varphi_3} R(-3)^4 \xrightarrow{\varphi_2} R(-2)^3 \xrightarrow{\varphi_1} R \rightarrow 0
\]
where 
$F_{2i} = R(-3i)^4$ and $F_{2i-1} = R(-3i+1)^3 \oplus R(-3(i-1))$ for $i \geq 2$, the initial maps are 
\[
\varphi_1 = 
    \begin{blockarray}{*{4}{>{\scriptstyle}c}<{}}
    1 & 2 & 3 \\ 
    \begin{block}{[*{3}r]>{\scriptstyle}c<{}}
    xy & xz & yz & \emptyset \\
    \end{block}
    \end{blockarray}
\text{ and }
\varphi_2 = 
    \begin{blockarray}{*{5}{>{\scriptstyle}c}<{}}
    \emptyset & 12 & 13 & 23 \\ 
    \begin{block}{[r|*{3}r]>{\scriptstyle}c<{}}
    z & z & z & 0 & 1 \\ 
    0 & -y & 0 & y & 2 \\ 
    0 & 0 & -x & -x & 3 \\
    \end{block}
    \end{blockarray},
\] 
and the (non-minimal) tail of the resolution is periodic with maps 
\[ 
\varphi_{2i-1} = 
    \begin{blockarray}{*{5}{>{\scriptstyle}c}<{}}
    1 & 2 & 3 & 123 \\ 
    \begin{block}{[*{3}r|r]>{\scriptstyle}c<{}}
    xy & xz & yz & 0 & \emptyset \\
    \BAhhline{----}
    0 & -xz & 0 & 1 & 12 \\
    0 & 0 & -yz & -1 & 13 \\ 
    0 & 0 & 0 & 1 & 23 \\
    \end{block}
    \end{blockarray}
\text{ and }
\varphi_{2i} = 
    \begin{blockarray}{*{5}{>{\scriptstyle}c}<{}}
    \emptyset & 12 & 13 & 23 \\ 
    \begin{block}{[r|*{3}r]>{\scriptstyle}c<{}}
    z & z & z & 0 & 1 \\ 
    0 & -y & 0 & y & 2 \\ 
    0 & 0 & -x & -x & 3 \\
    \BAhhline{----}
    0 & 0 & 0 & xyz & 123 \\
    \end{block}
    \end{blockarray}
\]
for $i \geq 2$. 

The vertical lines in the maps are visual dividers indicating the origin of the constituent blocks (whether they come from $\sigma_\bfzero$ or $\sigma_{\mathbf{1}}$). Note that the domains of $\varphi_{2i-1}$ and $\varphi_{2i}$ are indexed by the odd and even subsets of $\{1,2,3\}$ respectively. This division of domains into even and odd subsets will always appear in the tail of the resolution in the $c=1$ case.

Note that writing $xyz = 0 \cdot xy + y \cdot xz + 0 \cdot yz$, we get a different homotopy and therefore a resolution with the same modules as $\bfF$ but different differentials:
\[
\varphi'_2 = 
    \begin{blockarray}{*{5}{>{\scriptstyle}c}<{}}
    \emptyset & 12 & 13 & 23 \\ 
    \begin{block}{[r|*{3}r]>{\scriptstyle}c<{}}
    0 & z & z & 0 & 1 \\ 
    y & -y & 0 & y & 2 \\ 
    0 & 0 & -x & -x & 3 \\
    \end{block}
    \end{blockarray},
\varphi'_{2i-1} = 
    \begin{blockarray}{*{5}{>{\scriptstyle}c}<{}}
    1 & 2 & 3 & 123 \\ 
    \begin{block}{[*{3}r|r]>{\scriptstyle}c<{}}
    xy & xz & yz & 0 & \emptyset \\
    \BAhhline{----}
    xy & 0 & 0 & 1 & 12 \\
    0 & 0 & 0 & -1 & 13 \\ 
    0 & 0 & -yz & 1 & 23 \\
    \end{block}
    \end{blockarray}
\text{ and }
\varphi'_{2i} = 
    \begin{blockarray}{*{5}{>{\scriptstyle}c}<{}}
    \emptyset & 12 & 13 & 23 \\ 
    \begin{block}{[r|*{3}r]>{\scriptstyle}c<{}}
    0 & z & z & 0 & 1 \\ 
    y & -y & 0 & y & 2 \\ 
    0 & 0 & -x & -x & 3 \\
    \BAhhline{----}
    0 & 0 & -xyz & 0 & 123 \\
    \end{block}
    \end{blockarray}
\]
for $i \geq 2$. 

Finally, writing $xyz = 0 \cdot xy + 0 \cdot xz + x \cdot yz$ to yields a third 
resolution 
with maps
\[
\varphi''_2 = 
    \begin{blockarray}{*{5}{>{\scriptstyle}c}<{}}
    \emptyset & 12 & 13 & 23 \\ 
    \begin{block}{[r|*{3}r]>{\scriptstyle}c<{}}
    0 & z & z & 0 & 1 \\ 
    0 & -y & 0 & y & 2 \\ 
    x & 0 & -x & -x & 3 \\
    \end{block}
    \end{blockarray},
\varphi''_{2i-1} = 
    \begin{blockarray}{*{5}{>{\scriptstyle}c}<{}}
    1 & 2 & 3 & 123 \\ 
    \begin{block}{[*{3}r|r]>{\scriptstyle}c<{}}
    xy & xz & yz & 0 & \emptyset \\
    \BAhhline{----}
    0 & 0 & 0 & 1 & 12 \\
    xy & 0 & 0 & -1 & 13 \\ 
    0 & xz & 0 & 1 & 23 \\
    \end{block}
    \end{blockarray}
\text{ and }
\varphi''_{2i} = 
    \begin{blockarray}{*{5}{>{\scriptstyle}c}<{}}
    \emptyset & 12 & 13 & 23 \\ 
    \begin{block}{[r|*{3}r]>{\scriptstyle}c<{}}
    0 & z & z & 0 & 1 \\ 
    0 & -y & 0 & y & 2 \\ 
    x & 0 & -x & -x & 3 \\
    \BAhhline{----}
    0 & xyz & 0 & 0 & 123 \\
    \end{block}
    \end{blockarray}
\]
for $i \geq 2$. 

Examples~\ref{ex:monhypersurface1} shows that the universality of $\bfT$ does not quite carry over; there are some choices to be made that change the form of the resolution $\bfF$. However, characteristic of $\bbk$ permitting, we can create a sort of ``average homotopy'' $\frac{1}{3}(\sigma + \sigma' + \sigma'')$, which is \textit{also} a system of homotopies for $xyz$ on $\bfT$, and liberates us from making a choice of how to express $xyz$ in terms of the generators of $I$. It is straightforward to check that a convex combination of systems of higher homotopies will also create a system of higher homotopies, so this averaging scheme can be generalized. The idea of taking an average of all possibilities to create a choice-free resolution also appears in \cite{emoMinResOMonId}.
\end{example}

\begin{example}[Codimension $1$ Polynomial $\fraka$ Case] \label{ex:polyhypersurface}
Take $Q = \bbk[x,y]$, $I = \ideal{x^2, y^2}$, and $\fraka = \ideal{x^2y + xy^2}$, so that $f_{1,1}=y$ and $f_{1,2}=x$. Then the resolution of $I$ over $R = Q/\fraka$ is 
\[
\bfF: \cdots 
\xrightarrow{\varphi_5} 
R(-6) \oplus R(-7) \xrightarrow{\varphi_4}
R(-5)^2 \xrightarrow{\varphi_3} 
R(-3) \oplus R(-4) \xrightarrow{\varphi_2}
R(-2)^2 \xrightarrow{\varphi_1}
R \rightarrow 0 
\]
where 
\[
\varphi_1 =     \begin{blockarray}{*{3}{>{\scriptstyle}c}<{}}
    1 & 2 \\ 
    \begin{block}{[*{2}r]>{\scriptstyle}c<{}}
    x^2 & y^2 & \emptyset \\
    \end{block}
    \end{blockarray},
\quad 
\varphi_{2i} =     \begin{blockarray}{*{3}{>{\scriptstyle}c}<{}}
    \emptyset & 12 \\ 
    \begin{block}{[r|r]>{\scriptstyle}c<{}}
    y & y^2 & 1 \\
    x & -x^2 & 2 \\
    \end{block}
    \end{blockarray},
\text{ and } 
\varphi_{2i+1} = 
\begin{blockarray}{*{3}{>{\scriptstyle}c}<{}}
    1 & 2 \\ 
    \begin{block}{[rr]>{\scriptstyle}c<{}}
    x^2 & y^2 & \emptyset \\
    \BAhhline{--}
    x & -y & 12 \\
    \end{block}
    \end{blockarray}
\]
for $i \geq 1$.
\end{example}

\begin{example}[Codimension $2$ Polynomial $\fraka$ Case]\label{ex:polycodim2} 
Take $Q=\bbk[x,y,z,w]$, $I = \ideal{x^2, y^2, z^2, w^2}$ and $\fraka=\ideal{x^3+y^3, z^3+w^3}$. 
The homotopies for $x^3+y^3$ are 
\[
\hspace{-1.6cm}
\sigma_{\oz,0} = 
    \begin{blockarray}{*{2}{>{\scriptstyle}c}<{}}
    \emptyset \\ 
    \begin{block}{[r]>{\scriptstyle}l<{}}
    x & 1 \\ 
    y & 2 \\ 
    0 & 3 \\ 
    0 & 4 \\
    \end{block}
    \end{blockarray},
\sigma_{\oz,1} = 
    \begin{blockarray}{*{5}{>{\scriptstyle}c}<{}}
    1 & 2 & 3 & 4 \\ 
    \begin{block}{[*{4}r]>{\scriptstyle}l<{}}
    y & -x & 0 & 0 & 12 \\
    0 & 0 & -x & 0 & 13\\
    0 & 0 & 0 & -x & 14 \\
    0 & 0 & -y & 0 & 23 \\
    0 & 0 & 0 & -y & 24 \\ 
    0 & 0 & 0 & 0 & 34 \\
    \end{block}
    \end{blockarray},
\sigma_{\oz,2} = 
    \begin{blockarray}{*{7}{>{\scriptstyle}c}<{}}
    12 & 13 & 14 & 23 & 24 & 34\\ 
    \begin{block}{[*{6}r]>{\scriptstyle}l<{}}
    0 & -y & 0 & x & 0 & 0 & 123 \\ 
    0 & 0 & -y & 0 & x & 0 & 124 \\ 
    0 & 0 & 0 & 0 & 0 & x & 134 \\
    0 & 0 & 0 & 0 & 0 & y & 234 \\
    \end{block}
    \end{blockarray},
\sigma_{\oz,3} = 
    \begin{blockarray}{*{5}{>{\scriptstyle}c}<{}}
    123 & 124 & 134 & 234 \\ 
    \begin{block}{[*{4}r]>{\scriptstyle}l<{}}
    0 & 0 & y & -x & 1234 \\
    \end{block}
    \end{blockarray},
\]
while the homotopies for $z^3+w^3$ are 
\[
\hspace{-1.6cm}
\sigma_{\zo,0} = 
    \begin{blockarray}{*{2}{>{\scriptstyle}c}<{}}
    \emptyset \\ 
    \begin{block}{[r]>{\scriptstyle}l<{}}
    0 & 1 \\ 
    0 & 2 \\ 
    z & 3 \\ 
    w & 4 \\
    \end{block}
    \end{blockarray},
\sigma_{\zo,1} = 
    \begin{blockarray}{*{5}{>{\scriptstyle}c}<{}}
    1 & 2 & 3 & 4 \\ 
    \begin{block}{[*{4}r]>{\scriptstyle}l<{}}
    0 & 0 & 0 & 0 & 12 \\
    z & 0 & 0 & 0 & 13\\
    w & 0 & 0 & 0 & 14 \\
    0 & z & 0 & 0 & 23 \\
    0 & w & 0 & 0 & 24 \\ 
    0 & 0 & w & -z & 34 \\
    \end{block}
    \end{blockarray},
\sigma_{\zo,2} = 
    \begin{blockarray}{*{7}{>{\scriptstyle}c}<{}}
    12 & 13 & 14 & 23 & 24 & 34\\ 
    \begin{block}{[*{6}r]>{\scriptstyle}l<{}}
    z & 0 & 0 & 0 & 0 & 0 & 123 \\ 
    w & 0 & 0 & 0 & 0 & 0 & 124 \\ 
    0 & w & -z & 0 & 0 & 0 & 134 \\
    0 & 0 & 0 & w & -z & 0 & 234 \\
    \end{block}
    \end{blockarray},
\sigma_{\zo,3} = 
    \begin{blockarray}{*{5}{>{\scriptstyle}c}<{}}
    123 & 124 & 134 & 234 \\ 
    \begin{block}{[*{4}r]>{\scriptstyle}l<{}}
    w & -z & 0 & 0 & 1234 \\
    \end{block}
    \end{blockarray}.
\]

The modules in the resolution take the form 
\[
\bfF: \cdots 
\xrightarrow{\varphi_6} 
    \begin{array}{c}
    R(-8)^{12} \\ \bigoplus \\ R(-9)^8
    \end{array}
\xrightarrow{\varphi_5}
    \begin{array}{c}
    R(-6)^3 \\ \bigoplus \\ R(-7)^{12} \\ \bigoplus \\ R(-8) 
    \end{array}
\xrightarrow{\varphi_4}
    \begin{array}{c}
    R(-5)^8 \\ \bigoplus \\ R(-6)^4
    \end{array}
\xrightarrow{\varphi_3}
    \begin{array}{c}
    R(-3)^2 \\ \bigoplus \\ R(-4)^6
    \end{array} 
\xrightarrow{\varphi_2}
    R(-2)^4 
\xrightarrow{\varphi_1}
R \rightarrow 0
\]
and a schematic for the differentials is illustrated below. To save some notational hassle, we will shorten notation by using $\sigma$ and $\widetilde{\sigma}$ for $\sigma_\oz$ and $\sigma_\zo$, respectively. To visually distinguish the pieces, we color the blocks from $\tau$, $\sigma$, and $\tildesigma$ with blue, orange, and red, respectively. 

\begin{minipage}{0.4\textwidth}
    \begin{tikzpicture}[scale=.47]
    \draw[gray] (0,0) rectangle (4,1);
    
    \fill[blue, opacity=.3] (0,0) rectangle (4,1);
    
    \node (000100) at (2,0.5) {$\tau_1$};
    
    \node (phi1) at (-1,0.5) {$\varphi_1=$};
    
    \end{tikzpicture}
    
    \vspace{1.5cm}
    
	\begin{tikzpicture}[scale=.47] 
	%
	\draw[gray] (0,0) rectangle (8,4); 
	\draw[dashed] (1,0) -- (1,4)
	                (2,0) -- (2,4);
	\fill[orange, opacity=.5] (0,0) rectangle (1,4);
	\fill[red, opacity=.5] (1,0) rectangle (2,4);
	\fill[blue, opacity=.3] (2,0) rectangle (8,4);
	\node (100010) at (0.5,2) {$\sigma_0$};
	\node (100001) at (1.5,2) {$\tildesigma_0$};
	\node (100200) at (5,2) {$\tau_2$};
	\node (phi2) at (-1,2) {$\varphi_2=$}; 
	\end{tikzpicture}
\end{minipage}
\begin{minipage}{0.6\textwidth} 
	\begin{tikzpicture}[scale=.47] 
	
	\draw[gray] (0,0) rectangle (16,12); 
	
	\draw[dashed] (1,4) -- (1,12)
	                (2,4) -- (2,12)
	                (3,0) -- (3,12)
	                (9,0) -- (9,12)
	                (15,0) -- (15,8)
	                (1,4) -- (16,4)
	                (0,8) -- (15,8);
	
	\fill[opacity=.5, orange] (0,8) rectangle (1,12); 
	\fill[opacity=.5, red] (1,8) rectangle (2,12);
	\fill[opacity=.3, blue] (3,8) rectangle (9,12);
	
	\fill[opacity=.5, orange] (1,4) rectangle (2,8); 
	\fill[opacity=.5, red] (2,4) rectangle (3,8);
	\fill[opacity=.3, blue] (9,4) rectangle (15,8);
	
	\fill[opacity=.5, orange] (3,0) rectangle (9,4);
	\fill[opacity=.5, red] (9,0) rectangle (15,4);
	\fill[opacity=.3, blue] (15,0) rectangle (16,4);
	
	\node (110020) at (0.5,10) {$\sigma_0$};
	\node (110011) at (1.5,10) {$\tildesigma_0$};
	\node (110210) at (6,10) {$\tau_2$}; 
	\node (101011) at (1.5,6) {$\sigma_0$};
	\node (101002) at (2.5,6) {$\tildesigma_0$};
	\node (101201) at (12,6) {$\tau_2$};
	\node (300210) at (6,2) {$\sigma_2$};
	\node (300201) at (12,2) {$\tildesigma_2$};
	\node (300400) at (15.5,2) {$\tau_4$};
	
	\node (phi4) at (-1,6) {$\varphi_4=$};
	
	\end{tikzpicture}
\end{minipage}

\begin{minipage}{0.4\textwidth}
	\begin{tikzpicture}[scale=.47] 
	
	\draw[gray] (0,0) rectangle (12,8); 
	
	\draw[dashed] (4,0) -- (4,8)
	                (8,0) -- (8,7)
	                (0,6) -- (12,6)
	                (0,7) -- (8,7);
	
	\fill[blue, opacity=.3] (0,7) rectangle (4,8);
	\fill[blue, opacity=.3] (4,6) rectangle (8,7);
	
	\fill[orange, opacity=.5] (0,0) rectangle (4,6);
	\fill[red, opacity=.5] (4,0) rectangle (8,6);
	\fill[blue, opacity=.3] (8,0) rectangle (12,6);
	
	\node (010110) at (2,7.5) {$\tau_1$};
	\node (200110) at (2,3.5) {$\sigma_1$};
	\node (001101) at (6,6.5) {$\tau_1$};
	\node (200101) at (6,3.5) {$\tildesigma_1$};
	\node (200300) at (10,3.5) {$\tau_3$};
	
	\node (phi3) at (-1,4) {$\varphi_3=$};

    \end{tikzpicture}
\end{minipage}
\begin{minipage}{0.6\textwidth}
	\centering
	\begin{tikzpicture}[scale=.47]
	
	\draw[gray] (0,0) rectangle (20,16);
	
	\draw[dashed] (0,15) -- (8,15)
	                (4,14) -- (12,14)
	                (0,13) -- (16,13)
	                (0,7) -- (20,7)
	                (4,1) -- (20,1)
	                (4,14) -- (4,16)
	                (4,1) -- (4,13)
	                (8,1) -- (8,15)
	                (12,0) -- (12,14)
	                (16,0) -- (16,13);
	
	\fill[opacity=.3,blue] (0,15) rectangle (4,16);
	\fill[blue, opacity=.3] (4,14) rectangle (8,15);
	\fill[blue, opacity=.3] (8,13) rectangle (12,14); 
	
	\fill[orange, opacity=.5] (0,7) rectangle (4,13);
	\fill[red, opacity=.5] (4,7) rectangle (8,13);
	\fill[blue, opacity=.3] (12,7) rectangle (16,13);
	
	\fill[orange, opacity=.5] (4,1) rectangle (8,7);
	\fill[red, opacity=.5] (8,1) rectangle (12,7); 
	\fill[blue, opacity=.3] (16,1) rectangle (20,7);
	
	\fill[orange, opacity=.5] (12,0) rectangle (16,1);
	\fill[red, opacity=.5] (16,0) rectangle (20,1);
	
	\node (020120) at (2,15.5) {$\tau_1$};
	\node (011111) at (6,14.5) {$\tau_1$};
	\node (002102) at (10,13.5) {$\tau_1$};
	
	\node (210120) at (2,10) {$\sigma_1$};
	\node (210111) at (6,10) {$\tildesigma_1$};
	\node (210310) at (14,10) {$\tau_3$};
	
	\node (201111) at (6,4) {$\sigma_1$};
	\node (201102) at (10,4) {$\tildesigma_1$};
	\node (201301) at (18,4) {$\tau_3$};
	
	\node (400310) at (14,0.5) {$\sigma_3$};
	\node (400301) at (18,0.5) {$\tildesigma_3$};
	
	\node (phi5) at (-1,8) {$\varphi_5=$}; 
	
	\end{tikzpicture}
\end{minipage}

\end{example}

\bibliographystyle{alpha}
\bibliography{biblio}

\end{document}